\newtheorem{thm}{Theorem}
\newtheorem{lem}{Lemma}
\theoremstyle{definition}
\newtheorem{eg}{Example}
\title{Asymptotics of bivariate algebraico-logarithmic generating functions}
\author{Torin Greenwood\thanks{Department of Mathematics, North Dakota State University, Fargo, ND USA, \href{mailto:torin.greenwood@ndsu.edu}{torin.greenwood@ndsu.edu}}, Tristan Larson\thanks{Department of Mathematics, North Dakota State University, Fargo, ND USA, \href{mailto:tristan.larson@ndsu.edu}{tristan.larson@ndsu.edu}}}
\begin{document}

\maketitle

\begin{abstract}
We derive asymptotic formulae for the coefficients of bivariate generating functions with algebraic and logarithmic factors.  Logarithms appear when encoding cycles of combinatorial objects, and also implicitly when objects can be broken into indecomposable parts.  
Asymptotics are quickly computable and can verify combinatorial properties of sequences and assist in randomly generating objects.  While multiple approaches for algebraic asymptotics have recently emerged, we find that the contour manipulation approach can be extended to these D-finite generating functions.
\end{abstract}

A goal in analytic combinatorics in several variables (ACSV) is to derive asymptotic estimates for multivariate arrays that encode combinatorial information.  Asymptotic formulae are useful for computing highly accurate estimates of sequences, determining what large structures look like, and randomly generating objects.  In contrast, even when exact formulae can be found, they may be be cumbersome to evaluate or interpret
\cite{Wilf:1982}.  The schema for generating asymptotic estimates follows.
\begin{center}
    \includegraphics[width=\textwidth]{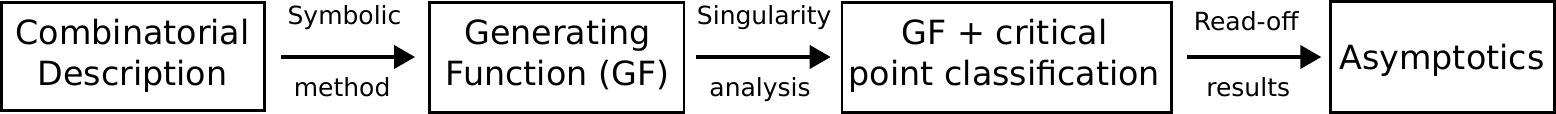}
\end{center}
The read-off results depend on the form of the generating function (GF).  Here, we broaden the read-off results to bivariate algebraico-logarithmic GFs, for several reasons:
\begin{enumerate}
    \item Algebraico-logarithmic GFs appear widely, including problems involving cycles, P\'{o}lya enumeration, P\'{o}lya urns \cite{Hwang:2022}, necklaces \cite{Hackl:2018}, and more.  Logarithms also appear in problems where a combinatorial family of objects can be described implicitly as components within other objects \cite[Section III.7.3]{FlSe:2009}.
    \item Our main theorem, \cref{thm:logH}, involves D-finite GFs, while most ACSV results have focused on rational or algebraic GFs.
    \item Differing approaches for computing algebraic asymptotics have recently emerged \cite{Greenwood:2018, GMRW:2022, BJP:2023}.  Although the direct contour manipulations of \cite{Greenwood:2018} are technical, here we show that this approach is readily applied to algebraico-logarithmic GFs.
\end{enumerate}
\section{Univariate asymptotics}

The \emph{symbolic method} is now standard \cite{FlSe:2009}, and it converts common combinatorial operations on sets into algebraic operations on GFs.  For example, if the GF $F(z)$ encodes the sequence counting objects of size $n$ from some set $\mathcal{F}$ as $n$ varies, then the GF $1/(1 - F(z))$ encodes the numbers of ordered sequences of objects of any length from $\mathcal{F}$ whose total size is $n$.  Other common operations include taking powersets, multisets, or ordered tuples from a set, or highlighting a component within a combinatorial object.  Of particular importance here is the \emph{cycle construction} for ordinary or exponential GFs, which involves sums of logarithms or a single logarithm, respectively.  Once we have a GF, we aim for asymptotics of the form
\begin{equation} \label{AsymptForm}
[z^n] F(z) \sim C n^r (\log^s n) \rho^n 
\end{equation}
as $n \to \infty$ where $C, r, s,$ and $\rho$ do not depend on $n$.  The location of a GF's closest singularities to the origin determines $\rho$, and the behavior of the GF near these singularities determines $C, r,$ and $s$.  \cref{eg:Catalan} below illustrates the efficiency of computing univariate asymptotics.\\
\hrule
\begin{eg}[Logarithms of Catalan numbers] \label{eg:Catalan} Logarithms of the Catalan number GF were considered in Knuth's 2014 Christmas Tree lecture \cite{Knuth:2014} and the American Mathematical Monthly \cite{Knuth:2015}.  They have since been studied \cite{Chu:2019} and can encode cycles of Dyck paths and families of labelled paths \cite{JaKo:2023}.  In particular, consider
\[
D^{(m)}(z) = \left[\log \left(\frac{1 - \sqrt{1 - 4z}}{2z} \right)\right]^m.
\]
The singularity at $z = 0$ is removable.  Otherwise, $D^{(m)}(z)$ has algebraic singularities determined by the zero set within the square root, $\{z: 1 - 4z = 0\}$, and when the input to the log is $0$, so $\{z: (1 - \sqrt{1 - 4z})/(2z) = 0\}$. The input to the logarithm is never zero (since the point $z = 0$ is a removable singularity), so the only singularity of $D^{(m)}(z)$ is at $z = 1/4$.  Expanding near $z = 1/4$ reveals
\begin{align*}
    D^{(m)}(z) &= \left(\log 2 + \sqrt{1 - 4z} - \frac{7}{2}(1-4z) + O(1 - 4z)^{3/2}\right)^m\\
        &= \log^m 2 - \binom{m}{1} [\log^{m-1} 2] \sqrt{1 - 4z} + \mbox{higher order terms in $(1 - 4z)$}.
\end{align*}
In this expansion, the next term with an algebraic singularity is $O(1 - 4z)^{3/2}$, so the transfer theorem from Flajolet and Odlyzko \cite{FlOd:1990} immediately yields
\[
[z^n] D(z) = \frac{m\log^{m-1} 2}{2\sqrt{\pi}} \cdot n^{-3/2} \cdot 4^n + O(4^n n^{-5/2}).
\]
\end{eg}
\hrule

\section{Multivariate asymptotics background} \label{sec:MultBack}

Multivariate GFs encode arrays of numbers, useful for tracking combinatorial parameters.  Alternatively, multivariate GFs can assist univariate analyses, including lattice walk enumeration \cite{BMMi:2010}.
Let $F(x, y)$ be the bivariate GF encoding the array $a_{r, s}$, so that $F(x, y) = \sum a_{r, s} x^r y^s.$  For a fixed \emph{direction} $\mathbf{\hat r} := (r_1, r_2) \in \mathbb{R}_{> 0}^2$, we search for an asymptotic expression for $[x^{r_1n}y^{r_2n}]F(x, y)$ as $n \to \infty$.  As in the univariate case, asymptotics are often in the form $C \cdot n^r (\log^s n) \cdot \rho^n$ for appropriate choices of constants $r, s,$ and $\rho$.  However, the idea of \emph{closest} singularity to the origin needs to be refined in multiple variables.

ACSV connects geometry and combinatorics, as there is a diverse set of possible geometries of GF singularities.  Our focus here is the simplest but most common scenario, when a GF has a smooth minimal critical point.  Other cases for rational GFs are covered in \cite{MePeWi:2024}.  Let $\mathcal{V} := \{(x, y) : H(x, y) = 0\}$ be a singular variety for a GF, defined by when some analytic function $H(x, y)$ is zero.  For rational GFs, $H$ is the denominator, while for algebraico-logarithmic GFs, $H$ may be the input of a square root, a logarithm, or the product of several such inputs.  Smooth critical points for the direction $\mathbf{\hat r} = (r_1, r_2)$ satisfy
\[
H = 0, \ \ r_2 xH_x = r_1 y H_y,
\]
where $H_x$ and $H_y$ represent the partial derivatives of $H$ with respect to $x$ and $y$ \cite{MePeWi:2024}.  By design, these equations yield points minimizing the exponential growth rate in \cref{eq:Cauchy} below. Additionally, a smooth critical point must be a location where $\mathcal{V}$ is locally a smooth manifold.  This can be checked by verifying that $H, H_x$ and $H_y$ never simultaneously vanish, since the implicit function theorem guarantees a local smooth parameterization near any point where at least one partial is nonzero.  When $H$ is a polynomial, all conditions for smooth critical points are also defined by polynomial equations, which means that identifying critical points can be done efficiently with Gr\"{o}bner bases.

\cref{thm:logH} requires that the critical point $(p, q)$ is \emph{minimal}, meaning there are no singularities coordinate-wise smaller in $\mathcal{V}$.  Minimality is simpler to check when a GF has only non-negative coefficients (the \emph{combinatorial} case).  Minimality is crucial here to allow an explicit Cauchy integral contour manipulation that reaches the critical point $(p, q)$.

\section{Generating function classifications and logarithms}

Multivariate rational GFs cover many combinatorial scenarios: not only do they count arrays enumerating the output of discrete finite automata, but they are also useful when more complicated sequences can be expressed as the diagonal of a rational GF.  Nonetheless, there are combinatorial situations where a sequence cannot be encoded as a diagonal of the terms of a rational GF, such as when the asymptotics of a sequence are not of the form $C n^{-s}\rho^{-n}$ for $s \in \mathbb{Z}/2$.  However, the dictionary of asymptotic results is incomplete for GFs beyond the rational domain.

A GF $F(\mathbf{z})$ can be classified according to what kind of equation $F$ satisfies.  Rational GFs satisfy linear equations with coefficients in $\mathbf{z}$, while algebraic GFs satisfy polynomial equations.  Even more broadly, D-finite GFs satisfy linear PDEs with coefficients in $\mathbf{z}$.

Several distinct approaches recently advanced asymptotic formulae for algebraic GFs. In \cite{Greenwood:2018}, a change of variables and a direct contour manipulation of the Cauchy integral formula lead to results for bivariate algebraic GFs.  This technical process is currently limited to two dimensions, but could be extended to more dimensions with some additional overhead.  Another possible approach \cite{GMRW:2022} embedded the coefficients of an algebraic GF into a rational GF in more variables.  Accessing the singular variety of an algebraic GF directly from its minimal polynomial sometimes gives faster and cleaner results \cite{BJP:2023}.  Finally, \cite{BeRi:1983, GaRo:1992, Dr:1994} give probabilistic interpretations of the coefficients of algebraic GFs.

It is less clear how to approach D-finite GFs.  Here, we modify the contour approach from \cite{Greenwood:2018} to attack a concrete class of bivariate D-finite GFs that include logarithms.  In contrast, it is not obvious how the other approaches could be adapted to this setting.
\section{Result}

Our focus is bivariate GFs of the form $F(x, y) = H(x, y)^{-\alpha}[\log H(x, y)]^\beta$, where $H(x, y)$ is analytic near the origin with only non-negative power series coefficients, and where $\beta \in \mathbb{Z}_{\geq 0}$ and $\alpha \in \mathbb{R}$ is not in $\mathbb{Z}_{\leq 0}$.  This form is motivated by the results in \cite{FlSe:2009}. 

\begin{thm} \label{thm:logH}
Let $H(x, y)$ be an analytic function near the origin whose power series expansion at $(0, 0)$ has non-negative coefficients.  Define $\mathcal{V} = \{(x, y) : H(x, y) = 0\}$.  Assume that there is a single smooth strictly minimal critical point of $\mathcal{V}$ at $(p, q)$ within the domain of analyticity of $H$ where $p$ and $q$ are real and positive.  Let $\lambda = \frac{r + O(1)}{s}$ as $r,s \to \infty$ with $r$ and $s$ integers. Define the following quantities:
    \begin{align*}
        \chi_1  &= \frac{H_y(p,q)}{H_x(p,q)} = \frac{p}{\lambda q}, \\
        \chi_2  &= \left.\frac{1}{2H_x}\left(\chi_1^2H_{xx} - 2\chi_1H_{xy} + H_{yy}\right)\right|_{(x,y)=(p,q)}, \\
        M       &= -\frac{2\chi_2}{p} - \frac{\chi_1^2}{p^2} - \frac{1}{\lambda q^2}.
    \end{align*}
Assume that $H_x(p,q)$ and $M$ are nonzero. Fix $\alpha \in \mathbb{R}$ where $\alpha \not \in \mathbb{Z}_{\leq 0}$ and $\beta \in \mathbb{Z}_{\geq 0}$.  Then, the following expression holds as $r,s \to \infty$:
\begin{align*}
    [x^ry^s]H(x,y)^{-\alpha}\log^\beta(H(x,y))
        \sim (-1)^\beta\frac{(-pH_x(p,q))^{-\alpha}r^{\alpha-1}}{\Gamma(\alpha)\sqrt{-2\pi q^2Mr}}p^{-r}q^{-s}\log^\beta r\left[1+\sum_{j\geq1}\frac{\mathcal{E}_j}{\log^jr}\right],
\end{align*}
where $\mathcal{E}_j = \sum_{k=0}^j\binom{\beta}{j}\binom{j}{k}\log^k\left(\frac{-1}{pH_x(p,q)}\right)\Gamma(\alpha)\left.\frac{d^{j-k}}{dt^{j-k}}\frac{1}{\Gamma(t)}\right|_{t=\alpha}$.
\end{thm}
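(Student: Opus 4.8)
\medskip
\noindent\textbf{Proof plan.}
The plan is to obtain \cref{thm:logH} from the $\beta=0$ case by differentiating in the parameter $\alpha$. Note that $H(0,0)>0$ (else $\log H$ is not analytic at the origin), so $c_{r,s}(\alpha):=[x^ry^s]H(x,y)^{-\alpha}=H(0,0)^{-\alpha}\cdot(\text{polynomial in }\alpha)$ is an entire function of $\alpha$, and coefficient-wise $\partial_\alpha H^{-\alpha}=-\log H\cdot H^{-\alpha}$. Hence $H^{-\alpha}\log^\beta H=(-1)^\beta\partial_\alpha^\beta H^{-\alpha}$ and
\[
[x^ry^s]\bigl(H^{-\alpha}\log^\beta H\bigr)=(-1)^\beta\,\partial_\alpha^\beta c_{r,s}(\alpha).
\]
Thus it suffices to prove, \emph{uniformly for $\alpha$ in a complex neighbourhood of the given value} and including the error term, that
\[
c_{r,s}(\alpha)=\Psi_{r,s}(\alpha)\bigl(1+O(1/r)\bigr),\qquad
\Psi_{r,s}(\alpha):=\frac{(-pH_x(p,q))^{-\alpha}\,r^{\alpha-1}}{\Gamma(\alpha)\sqrt{-2\pi q^2Mr}}\,p^{-r}q^{-s}.
\]
Granting this, write $\Lambda=-pH_x(p,q)$ and use $\partial_\alpha^k\Lambda^{-\alpha}=\log^k(1/\Lambda)\,\Lambda^{-\alpha}$, $\partial_\alpha^k r^{\alpha-1}=\log^k r\cdot r^{\alpha-1}$: the Leibniz rule applied to the $\alpha$-dependent factor $\Lambda^{-\alpha}r^{\alpha-1}/\Gamma(\alpha)$ of $\Psi_{r,s}$ gives $\partial_\alpha^\beta\Psi_{r,s}(\alpha)=\Psi_{r,s}(\alpha)\log^\beta r\bigl[1+\sum_{j\ge1}\mathcal{E}_j/\log^jr\bigr]$ with exactly the $\mathcal{E}_j$ of the statement --- the binomial expansion of $(\log r+\log(1/\Lambda))^k$ produces the $\binom{j}{k}\log^k(-1/(pH_x(p,q)))$ factors and the $\tfrac{d^{j-k}}{dt^{j-k}}\tfrac1{\Gamma(t)}$ come from the Gamma factor --- while $\partial_\alpha^\beta$ of the $O(1/r)$ error is controlled below.

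For the $\beta=0$ estimate I would run the contour-manipulation argument of \cite{Greenwood:2018}, adapted to the possibly non-algebraic exponent $-\alpha$. Start from $c_{r,s}(\alpha)=(2\pi i)^{-2}\oint\oint H(x,y)^{-\alpha}x^{-r-1}y^{-s-1}\,dx\,dy$ over small tori and integrate in $y$ first. Since $H_y(p,q)=\chi_1H_x(p,q)\ne0$, the equation $H(x,y)=0$ has an analytic branch $y=g(x)$ near $x=p$ with $g(p)=q$, and near it $H(x,y)=\phi(x,y)\bigl(1-y/g(x)\bigr)$ with $\phi$ analytic, non-vanishing, and $\phi(x,g(x))=-g(x)H_y(x,g(x))$. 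Using strict minimality of $(p,q)$ together with the non-negativity of the coefficients of $H$ to show that competing singularities contribute exponentially less, deform the $y$-contour to a Hankel-type loop around $y=g(x)$; the classical Hankel-integral evaluation (equivalently the Flajolet--Odlyzko transfer theorem \cite{FlOd:1990,FlSe:2009}, after Taylor-expanding $\phi^{-\alpha}$ at $y=g(x)$) yields, uniformly for $x$ on the relevant arc and for $\alpha$ locally,
\[
\frac1{2\pi i}\oint y^{-s-1}H(x,y)^{-\alpha}\,dy=\frac{\phi(x,g(x))^{-\alpha}}{\Gamma(\alpha)}\,s^{\alpha-1}\,g(x)^{-s}\bigl(1+O(1/s)\bigr).
\]

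It remains to integrate this against $x^{-r-1}$. Writing $r=\lambda s+O(1)$, this is a saddle-point (Laplace) integral with phase $\Phi(x)=\lambda\log x+\log g(x)$; the equation $\Phi'(p)=0$ is exactly the critical-point condition $pH_x(p,q)=\lambda qH_y(p,q)$, so $x=p$ is the saddle. Differentiating $H(x,g(x))=0$ gives $g'(p)=-1/\chi_1$, and a second differentiation together with the definition of $\chi_2$ gives $g''(p)=-2\chi_2/\chi_1^3$; plugging these into $\Phi''(p)=-\lambda/p^2+g''(p)/q-(g'(p)/q)^2$ and simplifying with $\chi_1=p/(\lambda q)$ produces the clean identity $\Phi''(p)=\lambda^3q^2M/p^2$. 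Laplace's method then gives $p^{-r}q^{-s}$ for the exponential order and the Gaussian factor $1/\sqrt{-2\pi q^2Mr}$ (after rewriting powers of $s$ as powers of $r$), while the surviving constant $\phi(p,q)^{-\alpha}=(-\tfrac p\lambda H_x(p,q))^{-\alpha}$, combined with the powers of $\lambda$ produced both by converting $s^{\alpha-1}$ to $r^{\alpha-1}$ and by the saddle, collapses to exactly $(-pH_x(p,q))^{-\alpha}r^{\alpha-1}$. All of the error bounds along the way are uniform in $\alpha$, which is what the reduction needs.

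The main obstacle is precisely this \emph{uniformity in $\alpha$}, carried down to the $O(1/r)$ error: it lets a Cauchy estimate for $\partial_\alpha^\beta$ on a disk of radius $1/\log r$ about the target value turn $\partial_\alpha^\beta$ of the error into a quantity of size $O\!\bigl(\Psi_{r,s}(\alpha)\log^\beta r/r\bigr)=o(\Psi_{r,s}(\alpha))$, smaller than every term retained in the theorem. The other technical burden is the justification of the two contour deformations --- that the parts of the inner $y$-contour and the outer $x$-contour away from $(p,q)$ are exponentially negligible --- which is the heart of the method of \cite{Greenwood:2018} and relies on strict minimality plus non-negative coefficients. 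An alternative that avoids differentiating in $\alpha$ (but not the contour estimates) is to keep $\log^\beta H$ throughout, expanding $\log H=\log\phi+\log(1-y/g)$ binomially and invoking the transfer theorem for $(1-y/g)^{-\alpha}\log^k\!\tfrac1{1-y/g}$ in the inner integral, at the cost of tracking more terms by hand.
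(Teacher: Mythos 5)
Your proposal is sound but takes a genuinely different route from the paper in two independent ways.

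\textbf{(1) Reduction to $\beta=0$ via $\alpha$-differentiation.} You prove only the $\beta=0$ case, then recover $\log^\beta H$ as $(-1)^\beta\partial_\alpha^\beta H^{-\alpha}$, and control the resulting error term by a Cauchy estimate on a disk of radius $1/\log r$. The paper does the opposite: it carries the factor $\log^\beta\tilde H$ through the entire contour manipulation and disposes of it only at the last step (\cref{lem:uInt}), by substituting $u=pz$, expanding $\bigl[\log\tfrac{1}{1-z}+L\bigr]^\beta$ binomially, and invoking Theorem~3A of \cite{FlOd:1990} term by term. Your computation of $\partial_\alpha^\beta$ of the leading factor $\Lambda^{-\alpha}r^{\alpha-1}/\Gamma(\alpha)$ (with $\Lambda=-pH_x(p,q)$) does reproduce the paper's coefficients $\mathcal{E}_j$ — one can check the identity $\binom{\beta}{j-k}\binom{\beta-j+k}{k}=\binom{\beta}{j}\binom{j}{k}$ that reconciles your Leibniz/binomial bookkeeping with the paper's — so this is a legitimate alternative. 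Its cost is that you must establish the $\beta=0$ asymptotic \emph{uniformly for complex $\alpha$ in a shrinking neighbourhood}, including the $O(1/r)$ error; you flag this as the main obstacle, and that is indeed the technical content you would need to supply, since the paper sidesteps it entirely by never differentiating in $\alpha$.

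\textbf{(2) Contour structure: Hankel in $y$ then saddle in $x$, vs.\ the paper's change of variables.} You leave $(x,y)$ alone, parameterize the singular variety as $y=g(x)$, factor $H=\phi\cdot(1-y/g(x))$, do a Hankel integral in $y$ uniformly along an arc of the $x$-contour, and finish with a Laplace/saddle integral in $x$ with phase $\Phi(x)=\lambda\log x+\log g(x)$. The paper instead changes variables to $u=x+\chi_1(y-q)+\chi_2(y-q)^2$, $v=y$, so that $\tilde H$ loses its $v$- and $v^2$-terms at $(p,q)$; this makes the Hankel direction essentially the new variable $u$ and the Fourier--Laplace direction $v$, and is what drives \cref{lem:Product}. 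Your identities $g'(p)=-1/\chi_1$, $g''(p)=-2\chi_2/\chi_1^3$, $\phi(p,q)=-(p/\lambda)H_x(p,q)$, and $\Phi''(p)=\lambda^3q^2M/p^2$ all check out, and the bookkeeping of $\lambda$-powers (from $s^{\alpha-1}\to r^{\alpha-1}$ and from the Gaussian factor) does collapse to the stated constant. So your route is correct in outline and is the more ``classical'' iterated coefficient-extraction style; the paper's change of variables is the specific device of \cite{Greenwood:2018} that makes the product approximation of the integrand (and the justification of the contour deformation) cleanest.

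Both differences are real and both can be made rigorous; be aware that the parts you wave at — the exponential negligibility of the contour away from $(p,q)$, and uniformity in $\alpha$ — are precisely where most of the technical work lies, and the paper's \cref{lem:Product}, \cref{lem:CorrectionN}, \cref{lem:vInt}, and \cref{lem:uInt} are its substitute for that work.
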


The theorem applies when the singularity nearest the origin is determined by a zero in the logarithm.  In other cases, the singularity may be algebraic, in which case existing algebraic results apply (see \cref{eg:Narayana}).  Also, when the closest singularity is instead due to a pole of $H$, rewriting $\log(H) = -\log(1/H)$ allows us to apply \cref{thm:logH}.  

\cref{thm:logH} could be extended to the case where $\beta \not \in \mathbb{Z}_{\geq 0}$, although this adds additional complexity because the logarithmic term in the GF then contributes an additional branch cut.  When $\beta \in \mathbb{Z}_{\geq 0}$, the series in \cref{thm:logH} is finite, but in general it is infinite.  Also, the theorem statement is still true when $\alpha = 0$ (or even $\alpha \in \mathbb{Z}$) by defining $(1/\Gamma(\alpha))$ and $\left. d^j/dt^j (1/\Gamma(t)) \right|_{t = \alpha}$
by their limits at $\alpha = 0$, as described in the univariate case in \cite{FlSe:2009}.  The analogous asymptotic expansion for $\alpha = 0$ can then be computed, still with descending powers of $\log(r)$ and leading term given by
    \[
    [x^ry^s]\log^\beta H(x,y) \sim (-1)^\beta\frac{\beta r^{-3/2}p^{-r}q^{-s}}{\sqrt{-2\pi q^2M}}\log^{\beta-1}r.
    \]

\section{Examples}
The examples below and more are analyzed in the {\tt SageMath} worksheet here:
\begin{center}
\url{https://cocalc.com/Tristan-Larson/FPSAC-algebraico-logarithmic/Examples}
\end{center}

\begin{eg}[Necklaces] \label{eg:Necklace}
As in \cite{Hackl:2018}, consider necklaces with black and white beads where no two white beads are adjacent.  These are analyzed in \cite{Hackl:2018} via a univariate GF, and they can be constructed by a ``necklace process'' shown in \cref{fig:Necklace} that relates to network communication models.
\begin{figure}
\begin{center}
    \includegraphics[width=.8\textwidth]{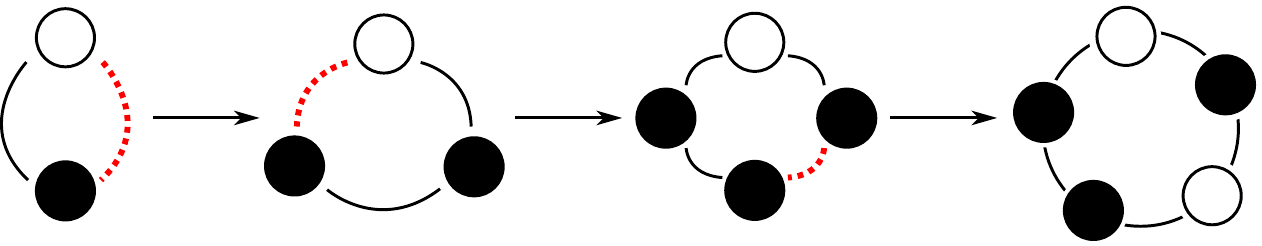}
\end{center}
\caption{Start with one white bead and one black bead.  Then, pick any edge in the necklace.  If both neighboring beads are black, insert a white bead on this edge.  Otherwise, insert a black bead.  The necklace illustrated above has multiple constructions but contributes once to the $x^5y^2$ term in the bivariate GF in \cref{eg:Necklace}.\\
}
\label{fig:Necklace}
\hrule
\end{figure}
Let $\varphi$ be Euler's totient function.  We consider the bivariate GF
    \[N(x,y) = \sum_{k\geq 1}\frac{\varphi(k)}{k}\log     \left(\frac{1-x^k}{1-x^k-y^kx^{2k}}\right),\]
in which the coefficient $[x^ry^s]N(x,y)$ counts the number of necklaces with $r$ total beads and $s$ white beads.  The GF can be derived by viewing necklaces as cycles of $\{$white beads followed by a positive number of black beads$\}$.

Consider seeking asymptotics in the direction $(\ell, 1)$ with $\ell > 2$.  Combinatorially, $\ell>2$ corresponds to necklaces having more than twice as many total beads as white beads.  For each $k$, define $H_k = 1-x^k-y^kx^{2k}$, which contributes $k^2$ critical points satisfying
    \[
    x^k = \frac{\ell-2}{\ell-1},\ y^k = \frac{\ell-1}{(\ell-2)^2}.
    \] 
  Define $(p_k, q_k)$ to be the positive real solution.  We verify that there are no nonsmooth critical points by checking for each $k$ that $[H_k = 0, \frac{\partial}{\partial x} H_k = 0, \frac{\partial}{\partial y} H_k = 0]$ has no solutions.
Furthermore, the exponential growth rate near any critical point $(p_k, q_k)$ is given by $|p_k^\ell q_k|^{-1}$, which can easily be verified as maximized when $k = 1$.
Thus, we need only to consider the contributions from the critical point $(p_1, q_1)$ determined by the first term in the sum, $\log\left(\frac{1-x}{1-x-yx^2}\right)$. For this, we find that
        \[
        \chi_1 = \frac{(\ell-2)^3}{\ell(\ell-1)^2},\ 
        \chi_2 = -\frac{(2\ell-1)(\ell-2)^5}{\ell^3(\ell-1)^3},\ \text{and}\ 
        M      = -\frac{(\ell-2)^5}{\ell^3(\ell-1)}.
        \]
Then the asymptotic enumeration formula of \cref{thm:logH} gives
    \[[x^{\ell n}y^n]N(x,y) \sim \frac{n^{-3/2}\ell^{5/2}}{\sqrt{2\pi}}\frac{(\ell-1)^{(2\ell n-2n+3)/2}}{(\ell-2)^{(2\ell n-4n+9)/2}}.\]

To illustrate accuracy, consider $\ell = 3$. The approximation becomes $[x^{pn}y^n]N(x,y) \sim 18n^{-3/2}4^n\sqrt{3/\pi},$
yielding (for instance) the estimate $[x^{225}y^{75}]N(x,y) \approx 6.199 \times 10^{41}.$
The actual value is $[x^{225}y^{75}]N(x,y) = 6.188\ldots \times 10^{41}$, with an error of only $0.167\%$.
\end{eg}

\begin{figure}
\begin{center}
\includegraphics[width=.95\textwidth]{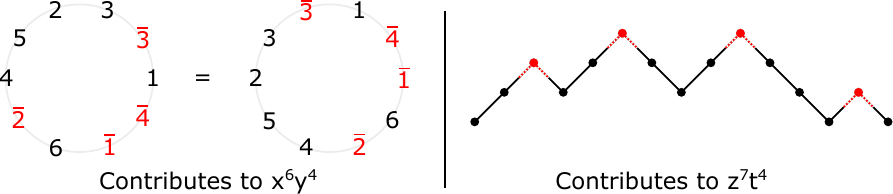}
\end{center}
\caption{On the left, a cyclical interlaced permutation is drawn as enumerated in \cref{eg:CyclicInterlaced}.  Because there are 6 black numbers and 4 red numbers, this contributes to $x^6y^4$.  Rotating the numbers gives the same configuration.  On the right, a Dyck path with 14 steps and 4 peaks is drawn, which contributes to $z^7t^4$ in the GF from \cref{eg:Narayana}.\\}\label{fig:NarayanaNumberRing}
\hrule
\end{figure}

\begin{eg}[Cyclical Interlaced Permutations] \label{eg:CyclicInterlaced}
    Let $\mathcal{C}$ be the set of circular arrangements of the bicolored set 
    $\{1, 2, \ldots, n, {\color{red}\overline{1}}, {\color{red}\overline{2}}, \ldots, {\color{red}\overline{m}}\}$,
    with $m + n \geq 1$, as illustrated in \cref{fig:NarayanaNumberRing}.  
    For fixed $m$ and $n$, there are $(n + m)!/(n + m)$ arrangements. 
    So, if $x$ tracks the number of black elements and $y$ tracks the number of the red (barred) elements, $\mathcal{C}$ has the GF,
        \[C(x,y) = \log\left(\frac{1}{1-x-y}\right) = \sum_{n+m\geq1}\frac{1}{n+m}\binom{n+m}{n}x^ny^m.\]
    Note that this GF is the logarithm of the GF in \cite[Examples 2.2, 8.13, 9.10]{MePeWi:2024}.  The labelled objects here lead to an exponential GF with a single logarithm, in contrast to the unlabelled objects in \cref{eg:Necklace} that yield an ordinary GF with a sum of logarithms.
    
    We will now compute the asymptotics in the direction $(1,\ell)$, where $\ell > 0$. There is a unique minimal smooth critical point at $(1/(1 + \ell), \ell/(1 + \ell))$%
    . For the quantities defined in \cref{thm:logH}, we have $\chi_1 = 1, \chi_2 = 0,$ and $M = -(1+\ell)^3/\ell,$ yielding
        \[[x^ry^{\ell r}]C(x,y) \sim \frac{r^{-3/2}\ell^{-r\ell}(1+\ell)^{(1+\ell)r}}{\sqrt{2\pi \ell(1+\ell)}}.\]
\end{eg}

\begin{eg}[Logarithm of Narayana numbers]  \label{eg:Narayana} The Narayana numbers refine \cref{eg:Catalan}: let $a_{n, s}$ be the number of Dyck paths with length $n$ and number of peaks $s$, as in \cref{fig:NarayanaNumberRing}.  This example illustrates how algebraic singularities may still determine asymptotics for a non-algebraic GF.  Let $N(z, t) := \sum a_{n, s} z^n t^s$ be the GF for the Narayana numbers, and let $P(z, t)$ count the Dyck paths that never return to the $x$-axis except at their start and end.  Then, from the symbolic method, $N$ and $P$ satisfy the relations,
\begin{align*}
N(z, t) &= \frac{1}{1 - P(z, t)}, \ \ P(z, t) = tz + z(N(z, t)-1).\\
N(z, t) &= \frac{1 + z - tz - \sqrt{(1 + z - tz)^2 - 4z}}{2z}.
\end{align*}
Consider the growth rate of the coefficients $[z^n t^s] \log^r N(z, t)$ in the direction $(\ell, 1)$ for any $\ell > 1$.
To determine the singularities of $\log^r N(z, t)$, note that $N$ has a removable singularity at $z = 0$, with limit $1$.  Thus, $\log^r N(z, t)$ has singularities from the logarithm determined by $N(z, t) = 0$ (with $z \neq 0)$ and algebraic singularities determined by the zero set of $H(z, t) := (1 + z - tz)^2 - 4z$.  A simple analysis determines that $N(z, t) \neq 0$ for any values of $z$ and $t$.  
Thus, we find a single smooth critical point at $(p, q) := ([1 - 1/\ell]^2, 1/[\ell-1]^2)$, and there are no nonsmooth critical points.

To use the results in \cite{Greenwood:2018}, we must also ensure that the critical point is minimal.  This is slightly more difficult here, but because $N(z, t)$ is combinatorial, there must be a minimal singularity with positive real coordinates by Pringsheim's Theorem.  To verify minimality, consider points of the form $(z, t) = (vp, wq)$ for real paramters $0 \leq v, w \leq 1$, and search for values of $v$ and $w$ where $H(vp, wq) = 0$.  Because $H$ is quadratic, we can solve for $v$ in terms of $w$ and $\ell$, and then verify that for all $\ell > 1$ and all $v, w \in [0, 1]$, $dw/dv < 0$.  Ultimately, this implies that there are no solutions where $v$ and $w$ are both less than $1$, so that $(p, q)$ indeed must be minimal.

Near the critical point $(p, q)$ we can expand $\log^r N(z, t)$ as in \cref{eg:Catalan} to obtain
\[
\log^r N(z, t) = \log^r \left(\frac{1 + p - pq}{2p}\right) - r \log^{r - 1} \left(\frac{1 + p - pq}{2p}\right) \frac{\sqrt{H}}{1 + p - pq} + \cdots,
\]
from which we conclude that the $\sqrt{H}$ term determines the dominant asymptotics.  Applying Corollary 2 of \cite{Greenwood:2018} yields our final result,
\[
[z^{\ell n} t^n] \log^r N(z, t) \sim \frac{r}{{2}\pi}\log^{r-1} \left(\frac{\ell}{\ell - 1}\right) \cdot n^{-2} \cdot \left(\ell - 1\right)^{-2n(\ell - 1){-1}} \ell^{2\ell n - {1}}.
\]

\end{eg}

\section{Proof sketch}

We prove \cref{thm:logH} by using the Cauchy integral formula,
\begin{equation} \label{eq:Cauchy}
[x^r y^s] H(x,y)^{-\alpha}\log^\beta(H(x,y)) = \frac{-1}{4\pi^2} \iint_T H(x,y)^{-\alpha}\log^\beta(H(x,y)) x^{-r - 1} y^{-s - 1} dx dy,
\end{equation}
where $T$ is a torus centered at $(0, 0)$ that is small enough that it does not enclose any singularities of $H(x,y)^{-\alpha}\log^\beta(H(x,y))$.

\subsection{Step 1: Change of variables}  A key idea in \cite{Greenwood:2018} is to use this change of variables, with $\chi_1$ and $\chi_2$ as in \cref{thm:logH}:
\[
u = x + \chi_1(y - q) + \chi_2(y-q)^2,\ \ \ \ 
v = y.
\]
Call $H(x, y) := \tilde H(u, v)$. Expanding $\tilde H(u, v) = \sum_{m, n \geq 0} d_{m, n} (u-p)^m (v-q)^n$, we find $d_{0, 0} = d_{0, 1} = d_{0, 2} = 0$.  For functions of the form $F(u, v) = [\log \tilde H(u, v)]^\beta (\tilde H(u, v))^{-\alpha}$, it turns out that having these three terms equal to zero is enough to approximate $F$ near $(p, q)$ with the product of a function in $u$ and a function in $v$.

\subsection{Step 2: Choose a convenient contour} \label{sec:Contour}
In order to justify that $F$ can be written as a product, we first decide how to deform the torus $T$ in \cref{eq:Cauchy}.  We focus on the details of the contour when $v$ is near the critical point $q$, since the contour away from the critical point does not contribute to the asymptotics.  We choose approximately a product contour, with a Hankel contour in the $u$ variable contour and a circle of radius $q$ in the $v$ variable.

The $u$ variable contour will wrap around a point that shifts slightly depending on the $v$ variable: more precisely, since $(p, q)$ is a smooth critical point, the zero set $\mathcal{V} := \{(u, v) : H(u, v) = 0\}$ can be parameterized with a smooth function $G(v)$ such that $H(p + G(v), v) = 0$ locally near $v = q$.  Thus, we center the $u$ contour at the point $p + G(v)$.  See \cref{fig:ContourPicture} for a diagram of the $u$ contour near the point $(p, q)$.  Because we assume $(p,q)$ is a unique minimal critical point of $H$, for $v$ values away from $q$, we can expand the contour to circles with radii larger than $|p|$ and $|q|$ making this portion of the contour negligible.  The transition between regimes is described in greater detail in \cite{Greenwood:2018}.

\begin{figure}
\begin{center}
    \includegraphics[width=0.65\textwidth]{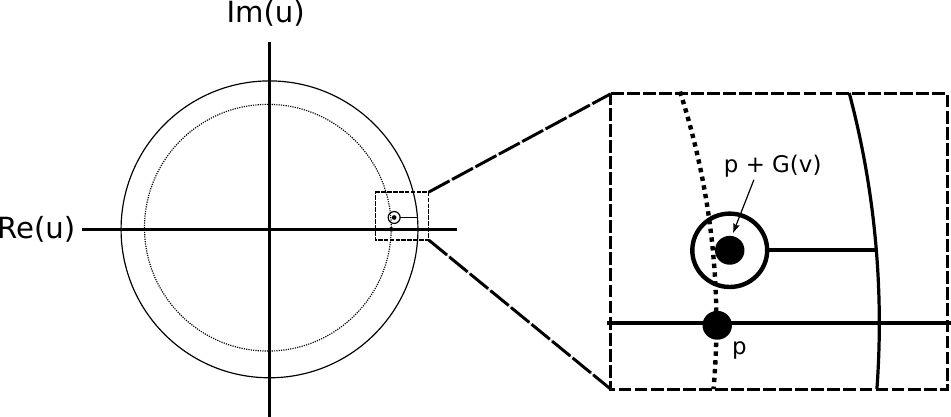}
\end{center}
\caption{The torus $T$ in the Cauchy integral is deformed so that near the critical point, it is approximately a product contour.  For $v$ close to $q$, the contour is exactly a circle.  For $u$ close to $p$, the contour expands beyond the critical point $p$ by using a Hankel-like wrapping around the zero set of $H$, which is parameterized in terms of $v$ by $p + G(v)$.\\}
\label{fig:ContourPicture}
\hrule
\end{figure}

\subsection{Step 3: Approximate the integrand with a product integral} \label{sec:ProductIntegrand}

After the change of variables to $(u, v)$ coordinates, we can estimate the resulting Cauchy integrand with a product of a function in $u$ and a function in $v$.
\begin{lem} \label{lem:Product}
Let $\mathcal{C}_r$ be the portion of the contour defined in \cref{sec:Contour} where $v$ is close to $q$.  Then,
\begin{multline*}
\iint_{T} ( \tilde H(u, v)^{-\alpha} \log^\beta (\tilde H(u, v))) (u - \chi_1(v-q) - \chi_2(v-2)^2)^{-r-1} v^{-s-1} dudv\\
\sim \iint_{\mathcal{C}_r} \bigg([H_x(p, q) (u-p)]^{-\alpha} \log^\beta (H_x(p, q) (u-p)) u^{-r-1} v^{-s-1}  \times\\
\left[1 - \frac{\chi_1(v-q) + \chi_2(v-q)^2}{p}\right]^{-r-1}\bigg) dudv.
\end{multline*}
\end{lem}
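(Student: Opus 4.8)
The plan is to localize the integral to the portion $\mathcal C_r$ of the deformed contour where $v$ is close to $q$, and then to show that, after the change of variables to $(u,v)$ coordinates, the full integrand $\tilde H(u,v)^{-\alpha}\log^\beta(\tilde H(u,v))$ can be replaced — up to an asymptotically negligible relative error — by the product of $[H_x(p,q)(u-p)]^{-\alpha}\log^\beta(H_x(p,q)(u-p))$ in the $u$-variable with the remaining elementary factors. I would begin by recording the consequences of Step 1: because $d_{0,0}=d_{0,1}=d_{0,2}=0$ in the expansion $\tilde H(u,v)=\sum_{m,n\geq 0} d_{m,n}(u-p)^m(v-q)^n$, every monomial in $\tilde H$ either contains a factor of $(u-p)$ or is of order at least $(v-q)^3$. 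Writing $\tilde H(u,v)=(u-p)A(u,v)+(v-q)^3 B(v)$ with $A(p,q)=d_{1,0}=H_x(p,q)\neq 0$ (the equality $d_{1,0}=\tilde H_u(p,q)=H_x(p,q)$ follows from the chain rule, since $\partial u/\partial x=1$), one sees that along the Hankel-like $u$-contour centered at $p+G(v)$ the dominant behavior of $\tilde H$ is $H_x(p,q)\cdot(u-p)$ times a factor $1+o(1)$.

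Next I would make this precise. Parameterize the $u$-contour at scale $1/r$ around $p+G(v)$, i.e. $u=p+G(v)+\zeta/r$ with $\zeta$ on a fixed Hankel contour, and expand $G(v)$ about $v=q$; since $G(q)=0$ and $G'(q)=-\chi_1$, $G''(q)/2 = -\chi_2$ (these identities come from implicitly differentiating $\tilde H(p+G(v),v)=0$ and using the definitions of $\chi_1,\chi_2$ in \cref{thm:logH}), we get $u-p = \zeta/r - \chi_1(v-q)-\chi_2(v-q)^2 + O((v-q)^3)$. On the relevant range $|v-q| = O(r^{-1/2})$, all of these terms are $O(r^{-1/2})$, so $\tilde H(u,v) = H_x(p,q)(u-p)\bigl(1+O(r^{-1/2})\bigr)$ and, crucially, $\log\tilde H(u,v) = \log\bigl(H_x(p,q)(u-p)\bigr) + \log(1+O(r^{-1/2})) = \log\bigl(H_x(p,q)(u-p)\bigr) + O(r^{-1/2})$. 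Raising to the powers $-\alpha$ and $\beta$ and multiplying, the replacement of $\tilde H(u,v)^{-\alpha}\log^\beta\tilde H(u,v)$ by $[H_x(p,q)(u-p)]^{-\alpha}\log^\beta[H_x(p,q)(u-p)]$ incurs a multiplicative error $1+O(r^{-1/2}\log r)$, uniformly over $\mathcal C_r$. Simultaneously, $(u-\chi_1(v-q)-\chi_2(v-q)^2)^{-r-1} = (u-p + [(u-p)-(u-\chi_1(v-q)-\chi_2(v-q)^2)])^{-r-1}$; here the bracketed correction is exactly $G(v)+\chi_1(v-q)+\chi_2(v-q)^2 = O((v-q)^3) = O(r^{-3/2})$, which against the $-r-1$ exponent contributes $1+O(r^{-1/2})$, while the genuine factor is $u^{-r-1}(1-[\chi_1(v-q)+\chi_2(v-q)^2]/p)^{-r-1}\cdot(1+o(1))$ after factoring out $p$ as in the univariate reduction. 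Finally $v^{-s-1}$ is carried through unchanged.

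To finish, I would control the tails: on $T\setminus\mathcal C_r$, that is for $v$ bounded away from $q$, minimality of $(p,q)$ lets us inflate both contours past radii $|p|$ and $|q|$, making $x^{-r-1}y^{-s-1}$ exponentially smaller than the critical growth rate $p^{-r}q^{-s}$ while $\tilde H^{-\alpha}\log^\beta\tilde H$ stays bounded (or grows at most polynomially near the branch locus, absorbed by the exponential gap), so this part is $o$ of the main term — this is the transition argument imported from \cite{Greenwood:2018}. Combining the uniform relative error on $\mathcal C_r$ with the negligible tail yields the claimed asymptotic equivalence. The main obstacle, and the step requiring the most care, is establishing \emph{uniformity} of the $1+o(1)$ estimate for $\log\tilde H$ over the whole Hankel contour $\mathcal C_r$: near the tip of the Hankel contour $u-p$ is itself $O(1/r)$, so one must check that $\log(H_x(p,q)(u-p))$ does not blow up faster than the error term shrinks — this works because the $\log$ only grows like $\log r$ there, but it forces the $\log r$ factor into the error bound and is the reason the correction terms $\mathcal E_j$ in \cref{thm:logH} are organized in descending powers of $\log r$ rather than powers of $r$. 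I would also need to verify that the branch of $\log$ is consistent along the deformation, which is where the hypothesis $\tilde H\neq 0$ off the critical point (strict minimality) and the assumption that the singularity comes from the logarithm, not a pole, are used.
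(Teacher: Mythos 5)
Your proof follows the same overall strategy as the paper: localize to the portion $\mathcal C_r$ of the deformed contour where $v$ is near $q$, discard the tail via minimality, and show that on $\mathcal C_r$ the integrand may be replaced by the product form with a uniform $1+o(1)$ multiplicative error. The paper packages this replacement into three explicit correction factors $K$, $L$, $N$, cites Lemmas 4 and 5 of \cite{Greenwood:2018} for $K$ and $L$, and proves $N=1+o(1)$ by the algebraic observation that $\log\tilde H = \log[C(u-p)] - \tfrac{1}{\alpha}\log L$, so that the new logarithmic factor reduces to the already-known estimate on $L$. You instead carry out the Taylor expansion of $\tilde H$ and of $G(v)$ directly along the rescaled Hankel contour $u=p+G(v)+\zeta/r$. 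Both routes work, and your version makes the geometric scaling (tube of width $O(r^{-1/2})$ in $v$, Hankel contour at scale $O(1/r)$ in $u$) more visible; the paper's version is more modular and avoids re-deriving the expansions.

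Two points in your write-up are off, though neither is fatal. First, the reasoning near the tip of the Hankel contour is reversed: you worry that $\log(H_x(p,q)(u-p))$ might ``blow up faster than the error term shrinks,'' but this quantity sits in the \emph{denominator} of the relative error $\big(\log\tilde H\big)/\big(\log[C(u-p)]\big) - 1 = O(r^{-1/2})/\log[C(u-p)]$, so its growth near the tip only \emph{improves} the estimate; there is no danger to control there, and your stated multiplicative error $1+O(r^{-1/2}\log r)$ is looser than what the computation gives, $1+O(r^{-1/2})$. It is still $1+o(1)$, so the lemma follows, but the concern you flag is not a real obstacle. Second, the descending powers of $\log r$ in the coefficients $\mathcal E_j$ of \cref{thm:logH} do not originate from the correction-factor error you describe; they arise in Step 4 from the Flajolet--Odlyzko transfer theorem applied to the univariate $u$-integral with $\log^{\beta-k}\tfrac{1}{1-z}$ factors. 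The correction factors in this lemma contribute only $o(1)$ errors that are subsumed into the $\sim$. There is also a small algebraic slip in your rewriting of $(u-\chi_1(v-q)-\chi_2(v-q)^2)^{-r-1}$ (the bracketed quantity you identify does not equal what you write), but the intended reduction to $u^{-r-1}\bigl(1-[\chi_1(v-q)+\chi_2(v-q)^2]/p\bigr)^{-r-1}$ is exactly the content of the paper's $K$ factor, and your conclusion is correct.
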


The full proof of this lemma is technical, generalizing a similar proof in \cite{Greenwood:2018}.  
Near $(p, q)$, tedious computations reveal $\tilde H(u, v)$ may be estimated by truncating its power series.  
Away from $(p, q)$, the contributions to the integral are exponentially smaller than the parts near $(p, q)$, and hence they may be ignored.
The addition of the logarithm adds some new technicalities to the proof.  To begin, we define correction factors $K, L,$ and $N$:
\[
K = \left(\frac{1 - \frac{\chi_1(v-q) + \chi_2(v-q)^2}{u}}{1 - \frac{\chi_1(v-q) + \chi_2(v-q)^2}{p}} \right)^{-r-1},\ \  L = \left(\frac{\tilde H(u, v)}{C(u-p)}\right)^{-\alpha},\ \ 
N = \left(\frac{\log \tilde H(u, v)}{\log [C(u-p)]} \right)^\beta,
\]
with $C := H_x(p, q)$.  The point of these factors is that the integrands of the left and right sides in \cref{lem:Product} are equal up to $K \cdot L \cdot N$.  Thus, in a neighborhood near $(p, q)$, the goal is to show that $K, L, N = 1 + o(1)$ uniformly.  Lemmas 4 and 5 of \cite{Greenwood:2018} state the result for $K$ and $L$, so it remains to show the equivalent result for $N$.
\begin{lem} \label{lem:CorrectionN}
When $u$ and $v$ are sufficiently close to $(p, q)$, the following holds uniformly as $r, s \to \infty$ with $\lambda = \frac{r + O(1)}{s}$:
\[
N(u, v) = 1 + o(1).
\]
\end{lem}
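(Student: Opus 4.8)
The plan is to reduce the claim to one uniform estimate on $\tilde H$ near $(p,q)$, exploiting the vanishing $d_{0,0}=d_{0,1}=d_{0,2}=0$ produced by the change of variables of Step~1. Write $C:=H_x(p,q)$; inverting the change of variables gives $x=u-\chi_1(v-q)-\chi_2(v-q)^2$, so $\partial x/\partial u=1$ and hence $\tilde H_u(p,q)=H_x(p,q)=C=d_{1,0}$, which is nonzero by hypothesis. Since $\tilde H$ is analytic near $(p,q)$ with $d_{0,0}=d_{0,1}=d_{0,2}=0$, I factor
\[
\tilde H(u,v)=C(u-p)\bigl(1+E(u,v)\bigr),\qquad E(u,v):=\frac{\tilde H(u,v)-C(u-p)}{C(u-p)},
\]
and claim it is enough to show $E(u,v)=o(1)$ uniformly on $\mathcal{C}_r$. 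Granting that, $|E|<1$ on $\mathcal{C}_r$ for $r$ large, so $\log\tilde H(u,v)=\log\!\bigl(C(u-p)\bigr)+\log\!\bigl(1+E(u,v)\bigr)$ with the principal branch of $\log(1+E)$, whence
\[
N(u,v)=\left(1+\frac{\log\bigl(1+E(u,v)\bigr)}{\log\bigl(C(u-p)\bigr)}\right)^{\beta}.
\]
On $\mathcal{C}_r$ one has $u\to p$, so $|C(u-p)|\to0$ and $\bigl|\log(C(u-p))\bigr|\to\infty$ uniformly; together with $\log(1+E)=E+O(E^2)=o(1)$ this makes the fraction $o(1)$ uniformly, and since $\beta$ is a fixed nonnegative integer, $N=(1+o(1))^{\beta}=1+o(1)$.

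To prove $E=o(1)$, I bound the numerator. The monomials surviving in $\tilde H(u,v)-C(u-p)=\sum d_{m,n}(u-p)^m(v-q)^n$ are precisely those with $m\ge2$, with $m=1$ and $n\ge1$, or with $m=0$ and $n\ge3$; by Cauchy estimates $|d_{m,n}|\le AB^{m+n}$ on a fixed polydisc about $(p,q)$, these three groups are bounded on a small enough neighborhood by constant multiples of $|u-p|^2$, of $|u-p|\,|v-q|$, and of $|v-q|^3$ respectively. Dividing by $|C|\,|u-p|$ gives
\[
|E(u,v)|\le \frac{A'}{|C|}\left(|u-p|+|v-q|+\frac{|v-q|^3}{|u-p|}\right).
\]
On $\mathcal{C}_r$ the first two terms are $o(1)$ since $u\to p$ and $v\to q$; for the third I use the scales of the contour of \cref{sec:Contour}. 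The $u$-contour is a Hankel-type wrapping of $p+G(v)$, where $G$ parameterizes $\mathcal{V}$ locally with $G(q)=0$ and — because $d_{0,1}=d_{0,2}=0$ — with $G(v)=O(|v-q|^3)$; the Hankel nose keeps $|u-(p+G(v))|$ of order at least $1/r$, so $|u-p|\gtrsim1/r$, while the localization of the $v$-circle keeps $|v-q|$ of order at most $r^{-1/2}$ up to logarithmic factors. Hence $|v-q|^3/|u-p|$ is $O(r^{-1/2})$ up to logarithmic factors, so $E(u,v)=o(1)$ uniformly, as needed.

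I expect the main obstacle to be correctly importing and bookkeeping the contour scales from \cref{sec:Contour}: the lower bound $|u-p|\gtrsim1/r$ from the Hankel nose (which forces $|\log(C(u-p))|\to\infty$ and thus lets the division absorb the error), the matching shrinking upper bound on $|v-q|$, and the estimate $G(v)=O(|v-q|^3)$. This is exactly the point where all three vanishing coefficients $d_{0,0}=d_{0,1}=d_{0,2}=0$ from the quadratic change of variables are used — without $d_{0,1}=d_{0,2}=0$ the final error term would be $|v-q|/|u-p|\asymp r^{1/2}$ or $|v-q|^2/|u-p|\asymp1$ on $\mathcal{C}_r$, destroying the uniform bound. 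The remaining technicalities parallel the proofs of $K=1+o(1)$ and $L=1+o(1)$ in \cite{Greenwood:2018}, with the one genuinely new point being that $\log(1+E)$ introduces no additional branch cut, because $\beta\in\mathbb{Z}_{\ge0}$ and $|E|<1$ on $\mathcal{C}_r$.
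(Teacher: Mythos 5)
Your proof is correct in substance and follows the same skeleton as the paper's: factor $\log\tilde H(u,v) = \log[C(u-p)] + \log\bigl(\tilde H/(C(u-p))\bigr)$, observe that the second term is $o(1)$ while the first is bounded away from zero in modulus, and conclude $N = (1+o(1))^\beta = 1+o(1)$. The genuine difference is in how you dispose of the second term. The paper simply notes that $L = \bigl(\tilde H/(C(u-p))\bigr)^{-\alpha}$ appears in Lemma 5 of \cite{Greenwood:2018}, cites $L = 1+o(1)$, and writes $\log(\tilde H/(C(u-p))) = -\tfrac{1}{\alpha}\log L = o(1)$. You instead re-derive the underlying estimate directly: set $E := \tilde H/(C(u-p)) - 1$, classify the surviving Taylor coefficients $d_{m,n}$, apply Cauchy estimates, and invoke the contour scales $|u-p|\gtrsim 1/r$, $|v-q|\lesssim r^{-1/2}$, $G(v)=O(|v-q|^3)$. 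What this buys is a self-contained argument and an explicit display of where the vanishing $d_{0,1}=d_{0,2}=0$ is used (the paper hides this inside the cited lemma); what it costs is that you are essentially re-proving Lemma 5 of \cite{Greenwood:2018} inline, which the paper's modular structure was designed to avoid. As a side benefit your route does not divide by $\alpha$, so it degenerates gracefully in the $\alpha=0$ extension mentioned after the theorem, where the paper's $-\tfrac1\alpha\log L$ device would not literally parse.

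Two small overstatements worth tightening. First, you assert $|\log(C(u-p))|\to\infty$ uniformly on $\mathcal C_r$ because $u\to p$; on a Hankel-type contour, $|u-p|$ ranges from the $O(1/r)$ nose out to a fixed small radius, so $|u-p|$ need not tend to zero uniformly. What is uniformly true (and is all the paper claims, and all that the argument needs) is that $|\log(C(u-p))|$ is bounded away from zero once the neighborhood is small enough that $|C(u-p)|<1$. Second, the sentence attributing the absence of a branch ambiguity in $\log(1+E)$ to $\beta\in\mathbb Z_{\ge0}$ conflates two independent facts: the principal branch of $\log(1+E)$ is well-defined because $|E|<1$ on the relevant contour, while $\beta\in\mathbb Z_{\ge0}$ ensures $N = (\,\cdot\,)^\beta$ is single-valued. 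Keep them separate.
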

\begin{proof}
    Again with $C := H_x(p, q)$, we write
    \begin{align*}
        \log \tilde H(u, v) &= \log [C(u-p)] + \log \frac{\tilde H(u, v)}{C(u-p)}
        = \log [C(u-p)] - \frac{1}{\alpha} \log L(u, v),
    \end{align*}
    \[
    N(u, v) = \left[ 1 - \frac{\frac{1}{\alpha}\log L(u, v)}{\log C(u-p)}\right]^\beta.
    \]
    From Lemma 5 of \cite{Greenwood:2018}, $L(u, v) = 1 + o(1)$ in this region.  Thus,
    $\log L(u, v) = o(1)$
    as $r \to \infty$.  Additionally, $|\log C(u - p)|$ is bounded away from zero as $u$ is close to $p$, implying that $N(u, v) = [1 + o(1)]^\beta = 1 + o(1)$ as desired.
\end{proof}

\subsection{Step 4: Evaluate the product integral}
We can now split \cref{eq:Cauchy} into two univariate integrals.  The $v$ integral is a standard Fourier-Laplace type integral that is identical to the case without logarithms.
\begin{lem}[Lemma 9 of \cite{Greenwood:2018}] \label{lem:vInt}
The following holds uniformly as $r, s \to \infty$ with $\lambda = \frac{r + O(1)}{s}$:
\[
\int_V v^{-s-1} \left[1 - \frac{\chi_1(v-q) + \chi_2(v-q)^2}{p} \right]^{-r-1} dv = iq^{-s} \sqrt{\frac{2\pi}{-q^2Mr}} + o\left(q^{-s}r^{-\frac{1}{2}}\right).
\]

\end{lem}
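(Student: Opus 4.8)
The plan is a standard Fourier--Laplace (saddle-point) analysis; the logarithmic factor of \cref{thm:logH} plays no role in this step, so the argument coincides with Lemma 9 of \cite{Greenwood:2018}. First I would parameterize the $v$-contour $V$ (the circle $|v| = q$, oriented counterclockwise) by $v = qe^{i\theta}$, $\theta \in (-\pi, \pi]$. Since $v^{-s-1}\,dv = iq^{-s}e^{-is\theta}\,d\theta$ exactly, the integral becomes
\[
iq^{-s}\int_{-\pi}^{\pi} e^{-is\theta}\, g(qe^{i\theta})^{-r-1}\,d\theta, \qquad g(v) := 1 - \frac{\chi_1(v-q) + \chi_2(v-q)^2}{p}.
\]
Writing $h(\theta) := \log g(qe^{i\theta})$ (so $h(0) = 0$), the integrand equals $iq^{-s}e^{E(\theta)}$ with $E(\theta) = -is\theta - (r+1)h(\theta)$, and the goal is Laplace's method in the large parameter $r$.

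Second, I would localize near $\theta = 0$. For small $\theta$, $g(qe^{i\theta}) \approx 1 - i\theta/\lambda$ (using $\chi_1 q/p = 1/\lambda$), so $|g(qe^{i\theta})| > 1$; in fact strict minimality of $(p,q)$ gives $|g(qe^{i\theta})| > 1$ for all $\theta \neq 0$ on the contour relevant to the asymptotics, hence $\operatorname{Re} E(\theta) = -(r+1)\log|g(qe^{i\theta})|$ has a strict maximum at $\theta = 0$ and the contribution of $|\theta| > \delta$ is exponentially smaller than $q^{-s}r^{-1/2}$ for any fixed small $\delta$.

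Third, I would Taylor-expand $E$ at $\theta = 0$. A short computation using $\chi_1 = p/(\lambda q)$ gives $h'(0) = -i/\lambda$; since $\lambda s = r + O(1)$ we get $r/\lambda = s + O(1)$, so the linear term of $E$ is $E'(0)\theta = i\bigl((r+1)/\lambda - s\bigr)\theta = O(1)\cdot i\theta$, i.e. it carries no growing prefactor and is $O(r^{-1/2})$ on the Gaussian scale $|\theta| \lesssim r^{-1/2}$. Differentiating once more yields $h''(0) = q^2\bigl(\tfrac{2\chi_2}{p} + \tfrac{\chi_1^2}{p^2}\bigr) + \tfrac1\lambda$, which by the definition of $M$ is exactly $-q^2 M$; thus $E''(0) = (r+1)q^2 M$ and $E(\theta) = \tfrac{(r+1)q^2 M}{2}\theta^2 + O(1)\cdot i\theta + O\bigl((r+1)\theta^3\bigr)$. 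Since $M < 0$ (so that $-q^2 M r > 0$) this is a genuinely decaying Gaussian, and the circle $|v|=q$ is, to leading order, already the steepest-descent path.

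Finally, I would run the usual Laplace estimate: on $|\theta| \le r^{-2/5}$ the cubic remainder is $o(1)$ and the linear term contributes only a relative $O(r^{-1/2})$ correction, while on $r^{-2/5} < |\theta| \le \delta$ the Gaussian is super-polynomially small; extending the Gaussian integral to $\mathbb{R}$ gives $\int_{\mathbb{R}} e^{(r+1)q^2 M \theta^2/2}\,d\theta = \sqrt{2\pi/(-(r+1)q^2 M)} = \sqrt{2\pi/(-q^2 M r)}\,(1 + O(1/r))$. Multiplying back by $iq^{-s}$ and collecting the error terms yields $iq^{-s}\sqrt{2\pi/(-q^2 M r)} + o(q^{-s}r^{-1/2})$, as claimed. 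I expect the main obstacle to be bookkeeping rather than conceptual: confirming that the $O(1)$ slack in $\lambda = (r + O(1))/s$ degrades the would-be linear term into a harmless $O(r^{-1/2})$ correction uniformly, and checking the algebraic identity $h''(0) = -q^2 M$ that produces precisely the constant under the square root.
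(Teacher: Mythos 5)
Your proposal is the standard Fourier--Laplace/saddle-point analysis, and this is almost certainly the same route taken in Lemma 9 of \cite{Greenwood:2018}, which the paper simply cites without reproducing a proof. The core computations check out: parameterizing $v = qe^{i\theta}$, one finds $g(qe^{i\theta}) = 1 - \tfrac{i}{\lambda}\theta + \bigl(\tfrac{1}{2\lambda} + \tfrac{\chi_2 q^2}{p}\bigr)\theta^2 + O(\theta^3)$, so $h(\theta) = \log g(qe^{i\theta}) = -\tfrac{i}{\lambda}\theta + \tfrac12\bigl(\tfrac{1}{\lambda} + \tfrac{2\chi_2 q^2}{p} + \tfrac{1}{\lambda^2}\bigr)\theta^2 + O(\theta^3)$, and using $1/\lambda^2 = \chi_1^2 q^2/p^2$ this gives exactly $h''(0) = -q^2M$, confirming the constant under the square root. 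Your observation that $E'(0) = i\bigl((r+1)/\lambda - s\bigr) = O(1)$ (using $\lambda s = r + O(1)$) is the right way to dispatch the linear term; in fact the odd Gaussian moment vanishes, so the correction is even better than the $O(r^{-1/2})$ you claim, but your weaker bound is enough for the stated $o(q^{-s}r^{-1/2})$ error.

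One point deserves a sharper justification. You write that ``strict minimality of $(p,q)$ gives $|g(qe^{i\theta})| > 1$ for all $\theta \neq 0$,'' but $g$ is the explicit quadratic $1 - \bigl(\chi_1(v-q) + \chi_2(v-q)^2\bigr)/p$ produced by the change of variables, not $H$ itself, so minimality of $(p,q)$ in $\mathcal{V} = \{H = 0\}$ says nothing directly about $|g|$ on the full circle $|v| = q$. The fix is that the contour $V$ in this lemma is not the entire circle: by the construction sketched in Section \ref{sec:Contour}, the $v$-contour is exactly the circle of radius $q$ only for $v$ in a small arc near $q$, with the rest of the torus pushed out to larger radii where the integrand of the original Cauchy integral is handled by a different (exponential-decay) estimate. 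On that small arc your local expansion already gives $|g(qe^{i\theta})|^2 = 1 + \theta^2/\lambda^2 + O(\theta^3) > 1$ for $0 < |\theta| \le \delta$, which is all that is needed. So your argument is correct once the domain of $V$ is made precise; just don't lean on ``minimality $\Rightarrow |g| > 1$ globally,'' since that implication is not valid as stated.
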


Thus, the final step in proving \cref{thm:logH} is to evaluate the $u$ integral.
\begin{lem} \label{lem:uInt} Define $\mathcal{E}_j = \sum_{k=0}^j\binom{\beta}{j}\binom{j}{k}\log\left(\frac{-1}{pH_x(p,q)}\right)\Gamma(\alpha)\left.\frac{d^{j-k}}{dt^{j-k}}\frac{1}{\Gamma(t)}\right|_{t=\alpha}$.  Then, as $r \to \infty$,
\begin{multline*}
    \frac{1}{2\pi i}\int_U\left(H_x(p,q)(u-p)\right)^{-\alpha}\left[\log^\beta(H_x(p,q)(u-p))\right]u^{-r-1}du \\
    \sim (-1)^\beta\frac{(-pH_x(p,q))^{-\alpha}r^{\alpha-1}}{\Gamma(\alpha)}p^{-r}\left[\log^\beta r\right]\left[1+\sum_{j\geq1}\frac{\mathcal{E}_j}{\log^jr}\right],
\end{multline*}
where $U$ is a small circle near $0$ that does not enclose any other singularities of the integrand.
\end{lem}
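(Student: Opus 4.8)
\textbf{Proof proposal for \cref{lem:uInt}.} The plan is to recognize the contour integral as a Taylor coefficient and then invoke singularity analysis. Write $C := H_x(p,q)$ and $f(u) := (C(u-p))^{-\alpha}\log^\beta(C(u-p))$, so the left-hand side is $\frac{1}{2\pi i}\int_U f(u)\,u^{-r-1}\,du = [u^r]f(u)$, since $U$ encircles only the pole at $u=0$ and $f$ continues analytically to $\mathbb{C}$ slit along the ray emanating from $u=p$ away from the origin (here $Cp<0$, consistent with the $(-pC)^{-\alpha}$ and $\log(-1/(pC))$ appearing in the statement; the sign is pinned down by the minimality/combinatorial hypotheses, or one simply fixes the branch cut so this holds). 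First I would make the linear substitution $z = u/p$, which moves the branch point to $z=1$: using $C(u-p) = (-Cp)(1-z)$ with $-Cp>0$,
\[
f(pz) = (-Cp)^{-\alpha}(1-z)^{-\alpha}\left(\log(-Cp) - \log\tfrac{1}{1-z}\right)^\beta, \qquad [u^r]f(u) = p^{-r}[z^r]f(pz).
\]

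Next I would expand the $\beta$-th power binomially, writing $f(pz)$ as a finite sum over $0\le k\le\beta$ of standard functions $(-Cp)^{-\alpha}\binom{\beta}{k}(\log(-Cp))^k(-1)^{\beta-k}(1-z)^{-\alpha}\big(\log\tfrac{1}{1-z}\big)^{\beta-k}$, and apply to each summand the transfer theorem for the standard function scale (Flajolet--Odlyzko \cite{FlOd:1990}; Flajolet--Sedgewick \cite{FlSe:2009}, Thm.~VI.2). This is legitimate because $f(pz)$ is $\Delta$-analytic at $z=1$ (being an explicit elementary function, it extends to a slit neighborhood of the closed unit disk), and after absorbing the harmless factor $z^{\beta-k}$ the theorem gives
\[
[z^r](1-z)^{-\alpha}\Big(\log\tfrac{1}{1-z}\Big)^{\beta-k} \sim \frac{r^{\alpha-1}}{\Gamma(\alpha)}(\log r)^{\beta-k}\left[1 + \sum_{i\ge1}\frac{\binom{\beta-k}{i}\,\Gamma(\alpha)\,\tfrac{d^i}{dt^i}\tfrac{1}{\Gamma(t)}\big|_{t=\alpha}}{(\log r)^i}\right].
\]

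Then I would reassemble: multiply by $p^{-r}$, sum over $k$, and factor $(-1)^\beta(-Cp)^{-\alpha}r^{\alpha-1}p^{-r}(\log r)^\beta/\Gamma(\alpha)$ out front, which already produces the claimed leading term $(-1)^\beta(-pH_x(p,q))^{-\alpha}r^{\alpha-1}p^{-r}\log^\beta r/\Gamma(\alpha)$. Collecting the coefficient of $(\log r)^{-j}$ means grouping the terms with $k+i=j$; the sign bookkeeping turns $\binom{\beta}{k}(\log(-Cp))^k(-1)^{-k}$ into $\binom{\beta}{k}\big(\log\tfrac{-1}{pH_x(p,q)}\big)^k$, so the coefficient is $\sum_{k=0}^{j}\binom{\beta}{k}\binom{\beta-k}{j-k}\big(\log\tfrac{-1}{pH_x(p,q)}\big)^k\Gamma(\alpha)\tfrac{d^{j-k}}{dt^{j-k}}\tfrac{1}{\Gamma(t)}\big|_{t=\alpha}$, which is exactly $\mathcal{E}_j$ once one applies the subset-of-a-subset identity $\binom{\beta}{k}\binom{\beta-k}{j-k} = \binom{\beta}{j}\binom{j}{k}$. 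Since $\mathcal{E}_j=0$ for $j>\beta$, the series is finite, as claimed.

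I expect the main obstacle to be organizational rather than deep: one must keep the logarithm's branch consistent under $z=u/p$ (equivalently, settle the sign of $pH_x(p,q)$), and carry the finitely many asymptotic expansions indexed by $k$ through the reassembly without cross-contaminating error terms -- here it helps that the sum over $k$ is finite, so the $o(\cdot)$ remainders simply add. A more self-contained alternative avoids citing the transfer theorem entirely: deform $U$ directly to a Hankel contour around $u=p$, substitute $u = p(1+t/r)$, write $\log(C(u-p)) = \log(Cpt) - \log r$, expand the $\beta$-th power, and evaluate the resulting integrals $\frac{1}{2\pi i}\int_{\mathcal{H}} t^{-\alpha}(\log(Cpt))^k e^{-t}\,dt$ as $k$-th $t$-derivatives of Hankel's representation of $1/\Gamma$; this reproduces the same constants and is closer to the contour-manipulation philosophy of \cite{Greenwood:2018}, at the cost of re-deriving the tail estimates that \cite{FlOd:1990} packages.
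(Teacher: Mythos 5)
Your proof follows essentially the same route as the paper's: substitute $z = u/p$, factor $H_x(p,q)(u-p) = (-pH_x(p,q))(1-z)$, pull out $(-1)^\beta(-pH_x(p,q))^{-\alpha}$, expand the $\beta$-th power binomially over $k$, apply the Flajolet--Odlyzko transfer theorem/Theorem~3A to each $(1-z)^{-\alpha}\bigl(\log\tfrac{1}{1-z}\bigr)^{\beta-k}$ term, and regroup the resulting double sum by powers of $\log^{-j}r$ using $\binom{\beta}{k}\binom{\beta-k}{j-k}=\binom{\beta}{j}\binom{j}{k}$. Your reassembly in fact recovers the $\log^k\bigl(\tfrac{-1}{pH_x(p,q)}\bigr)$ factor appearing in the $\mathcal{E}_j$ of \cref{thm:logH}, which is what the computation produces; the lemma statement's $\mathcal{E}_j$ dropped the exponent $k$ on the logarithm, evidently a typo.
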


\begin{proof} We begin with some factoring:
    \begin{multline*}
\frac{1}{2\pi i}\int_U\left(H_x(p,q)(u-p)
        \right)^{-\alpha}\left[\log^\beta(H_x(p,q)(u-p))\right]u^{-r-1}du \\
    = (-1)^\beta\frac{\left(-pH_x(p,q)\right)^{-\alpha}}{2\pi i}\int_U(1-u/p)^{-\alpha}\left[\log\frac{1}{1-u/p}+L\right]^\beta u^{-r-1}du, \\
\end{multline*}
 where $L=\log\frac{-1}{pH_x(p,q)}$. Substitute $u=pz$ and expand $\left[\log\frac{1}{1-z}+L\right]^\beta$ as a series:
    \[(-1)^\beta\frac{(-pH_x(p,q))^{-\alpha}}{2\pi i}p^{-r}\sum_{k\geq0}\binom{\beta}{k}L^k\int_U(1-z)^{-\alpha}\log^{\beta-k}\frac{1}{1-z} z^{-r-1}dz.\]

By \cite[Theorem 3A]{FlOd:1990}, we have for each $k$ that
    \[\frac{1}{2\pi i}\int_U(1-z)^{-\alpha}\log^{\beta-k}\frac{1}{1-z} z^{-r-1}dz \sim \frac{r^{\alpha-1}}{\Gamma(\alpha)}\log^{\beta-k}r\left[1+\sum_{j\geq1}\frac{c^{(k)}_j}{\log^j r}\right],\]
where $c^{(k)}_j := \binom{\beta-k}{j}\Gamma(\alpha)\left.\frac{d^j}{dt^j}\frac{1}{\Gamma(t)}\right|_{t=\alpha}$. So
\begin{align*}
    &(-1)^\beta\frac{(-pH_x(p,q))^{-\alpha}}{2\pi i}p^{-r}\sum_{k\geq0}\binom{\beta}{k}L^k\int_U(1-z)^{-\alpha}\log^{\beta-k}\frac{1}{1-z} z^{-r-1}dz \\
    &\quad\sim (-1)^\beta\frac{(-pH_x(p,q))^{-\alpha}r^{\alpha-1}}{\Gamma(\alpha)}p^{-r}\sum_{k\geq0}\binom{\beta}{k}L^k\log^{\beta-k}r\left[1+\sum_{j\geq1}\frac{c^{(k)}_j}{\log^j r}\right].
\end{align*}

Letting $e^{(k)}_j = \binom{\beta}{k}L^kc^{(k)}_j$, we can rewrite the double sum as
\begin{align*}
     \sum_{k\geq0}\binom{\beta}{k}L^k\log^{\beta-k}r\left[1+\sum_{j\geq1}\frac{c^{(k)}_j}{\log^j r}\right] &= \log^\beta r\sum_{k\geq0}\binom{\beta}{k}L^k\left[\frac{1}{\log^kr}+\sum_{j\geq k+1}\frac{c^{(k)}_{j-k}}{\log^j r}\right] \\ 
    = \log^\beta r\left[1+\sum_{j\geq1}\sum_{k=0}^j\frac{e^{(k)}_{j-k}}{\log^jr}\right] &= \log^\beta r\left[1+\sum_{j\geq1}\frac{\mathcal{E}_j}{\log^jr}\right],
\end{align*}
with $\mathcal{E}_j$ defined above. With this, we have the result as desired.
\end{proof}

\printbibliography

\end{document}